\documentclass[a4paper,11pt]{amsart}

\usepackage{amsthm}
\usepackage{amssymb}
\usepackage{latexsym}
\usepackage{euscript}
\usepackage{amsmath}
\usepackage{amstext}
\usepackage{amsgen}
\usepackage{amsbsy}
\usepackage{amsopn}
\usepackage{amsfonts}
\usepackage{hyperref}
\usepackage{graphicx}
\usepackage{bbm}
\usepackage{tikz}
\usepackage{tikz-cd}
\usepackage{enumerate}
\usepackage{color}
\usepackage{a4wide}
\usepackage{mathrsfs}

\usetikzlibrary{matrix,arrows,decorations.pathmorphing}


\newtheorem{theorem}{Theorem}[section]
\newtheorem{proposition}[theorem]{Proposition}
\newtheorem{corollary}[theorem]{Corollary}
\newtheorem{lemma}[theorem]{Lemma}

\newtheorem*{mainthm}{Main Theorem}

\theoremstyle{definition}

\newtheorem{remark}[theorem]{Remark}


\newcommand{\defn}[1]{\emph{#1}}

\newcommand{\ie}{i.e.\ }
\newcommand{\eg}{e.g.\ }

\newcommand{\Z}{\mathbb{Z}}

\newcommand{\R}{\mathbb{R}}
\newcommand{\C}{\mathbb{C}}

\newcommand{\PP}{\mathbb{P}}

\newcommand{\im}{\mathrm{im}}

\newcommand{\cat}[1]{\mathrm{#1}}
\renewcommand{\mod}[1]{{#1}\mathrm{-mod}}

\newcommand{\loc}[1]{\cat{Loc}\!\left( #1\right)}
\newcommand{\perv}[2]{{}^{#1}\cat{Perv}\!\left(#2\right)}
\newcommand{\projperv}[2]{{}^{#1}\cat{Proj}\!\left(#2\right)}
\newcommand{\pfun}[2]{{}^{#1}{#2}}

\newcommand{\ic}[2]{\pfun{#1}{\sh{IC}_{#2}}}

\newcommand{\constr}[2]{\cat{D}_{#1}({#2})}

\newcommand{\id}{id}

\newcommand{\mor}[3]{{\mathrm{Hom}^{#1}}\!\left(#2,#3\right)}
\newcommand{\Mor}[3]{{\mathrm{Hom}}_{#1}\!\left(#2,#3\right)}
\newcommand{\ext}[3]{{\mathrm{Ext}^{#1}}\!\left(#2,#3\right)}
\newcommand{\Ext}[4]{{\mathrm{Ext}_{#1}^{#2}}\!\left(#3,#4\right)}
\newcommand{\End}[1]{\mathrm{End}\!\left(#1\right)}

\newcommand{\sh}[1]{\mathcal{#1}}

\newcommand{\dual}{\mathcal{D}}

\newcommand{\pt}{{\bf pt}}

\newcommand{\DisTriang}[6]{
\begin{tikzcd}[row sep = small, column sep =small]
\end{tikzcd}
}
\newcommand{\epic}{\twoheadrightarrow}

\newcommand{\proj}[2]{\pfun{#1}{\sh{P}_{#2}}}

\title{Perverse sheaves and finite-dimensional algebras}
\author{Alessio Cipriani}
\author{Jon Woolf}

\address{Alessio Cipriani, Dept. of Mathematical Sciences, University of Liverpool, L69 7ZL, U.K.}
\email{A.Cipriani@liverpool.ac.uk}
\address{Jon Woolf, Dept. of Mathematical Sciences, University of Liverpool, L69 7ZL, U.K.}
\email{Jonathan.Woolf@liverpool.ac.uk}

\begin{document}

\maketitle
\section{Introduction}

Let $X$ be a topologically stratified space. The category $\perv{p}{X}$ of $p$-perverse sheaves on $X$, constructible with respect to the given stratification and with coefficients in a field $k$, captures interesting aspects of the topology of $X$ and its stratification. When $X$ is a complex algebraic variety with an algebraic stratification and $p(S) = -\dim_\C(S)$ is the middle-perversity, the perverse sheaves have deep connections with Morse theory, differential equations ($\mathscr{D}$-modules) and, for suitable $X$, with representation theory. 

Perverse sheaves are defined as the heart of a t--structure on the constructible derived category $\constr{c}{X}$ cut out by imposing cohomological vanishing conditions. Whilst this is convenient for theoretical purposes, one often wants a more explicit description, as modules over an algebra or as quiver representations. We characterise those $X$ for which the perverse sheaves can be described as modules over a finite-dimensional algebra. 
\begin{mainthm}[Corollary \ref{main result}]
The category $\perv{p}{X}$ is equivalent to the category of finite-dimensional (left) modules over a finite-dimensional $k$-algebra if and only if $X$ has finitely many strata and the same holds for the category of local systems on each of these strata.
\end{mainthm}
Surprisingly this result depends only upon the fundamental groups of the strata and not on the perversity or the way the strata are assembled. In particular, if $X$ has finitely many strata each with finite fundamental group then, for any perversity $p$ and field $k$, perverse sheaves can be described as finite-dimensional modules over a finite-dimensional algebra.

The key component of the proof is Theorem \ref{sufficient} in which we construct projective covers of simple perverse sheaves. Our approach generalises that in \cite[\S 3.2]{BGS} and is motivated by reverse-engineering. Effectively we assume that $\perv{p}{X}$ is equivalent to modules over a finite-dimensional algebra, specifically that perverse sheaves have a quiver description, and attempt to construct the projective cover in the same way one would for a quiver representation. This turns out to be possible when $X$ has finitely many strata and the category of local systems on each has enough projectives. The other ingredients of the proof are standard results about projective covers and generators, and elementary observations about the behaviour of projective perverse sheaves under recollement functors.

In principle one can use the construction of Theorem \ref{sufficient} to find a projective generator $\sh{P}$ for $\perv{p}{X}$, but in practice this is difficult because it is not obvious how to implement the construction as an effective algorithm. We discuss approaches to computing a projective generator in the final section, but leave the details of implementation for a second paper.

There is an extensive literature on algebraic and quiver descriptions of perverse sheaves and to provide some context we survey  some of the main themes. Unless otherwise stated, the results below hold for the middle-perversity $p(S) = -\dim_\R(S)/2$.  Be\u{\i}linson \cite{MR923134}, see also \cite{MR2671769}, uses the nearby and vanishing cycle functors to describe how perverse sheaves on a variety are glued from perverse sheaves on a hypersurface and its complement. This glueing construction can be used to obtain quiver descriptions for perverse sheaves on an algebraic curve, and also in various higher-dimensional cases. 

MacPherson and Vilonen \cite{MR833195} give a similar glueing construction for perverse sheaves on the complement of a closed stratum of a Whitney stratified space. This, together with micro-local techniques and deformation to the normal cone, is the key ingredient in \cite{MR1390658} in which they, together with Gel'fand, prove that the category of perverse sheaves on a stratified analytic variety is equivalent to the module category over a finitely-presented algebra. These ideas, together with micro-local Morse theory are used by Braden \cite{MR1900761}, and Braden and Grinberg \cite{MR1666554}, to obtain quiver descriptions for perverse sheaves on Schubert-stratified Grassmannians, and, respectively on rank stratifications of matrices. 
Prior to this, Be\u{\i}linson, Ginzburg and Soergel \cite{BGS} had used algebraic techniques to show that perverse sheaves on these spaces could be described as modules over a finite-dimensional Koszul algebra, and hence have a description as representations of a quiver with quadratic relations.

Another case in which Koszul algebra plays a prominent role is that of perverse sheaves on a triangulated space, now for any `classical' perversity satisfying $\dim_\R(S) - \dim_\R(T) \leq p(T)-p(S)\leq 0$. By utilising the extra combinatorial structure Polishchuk \cite{MR1453053} shows that these are representations of a Koszul algebra, and Vybornov \cite{MR1666864} uses this and the Koszul duality results of \cite{BGS} to describe the perverse sheaves as the constructible sheaves  with respect to a stratification by `perverse simplices'. Using similar techniques, but now back in the algebro-geometric setting and for the middle-perversity, Vybornov \cite{MR2264803} obtains a quiver description for perverse sheaves on Schubert-stratified flag varieties.  Continuing in a combinatorial vein, Kapranov and Schechtmann \cite{MR3450484} obtain a quiver description, with monomial relations, for perverse sheaves on complexified hyperplane arrangements. (Even in the simplest case of $\C$ stratified by $0$ and $\C^*$ their description differs from Be\u{\i}linson's glueing description.) Their construction is motivated by the earlier work of Galligo, Granger and Maisonobe on quiver descriptions for $\mathscr{D}$-modules \cite{MR781776}.

Finally, switching to a homotopy-theoretic perspective, MacPherson \cite{macpherson-notes} observes that constructible sheaves, a.k.a\ perverse sheaves for the zero perversity $p(S)=0$, can be described as representations of the exit path category, directly generalising  the usual monodromy description of local systems to stratified spaces. Treumann \cite{MR2575092} further generalises by showing that perverse sheaves on a topologically stratified space can be described as representations of the exit path $2$-category. We are not aware of any explicit descriptions of perverse sheaves from this viewpoint.
 
In summary, techniques from algebra, algebraic geometry, Morse-theory, combinatorial topology and homotopy theory have all been employed to obtain alternative descriptions of categories of perverse sheaves. As mentioned before, our approach is closest to the algebraic one of Be\u{\i}linson, Ginzburg and Soergel \cite{BGS}, but applied to more general spaces and perversities. The price we pay for this generality is that there is no longer any Koszul or highest weight category structure.

In brief, \S\ref{background} sets the context and introduces notation. Section \ref{local systems} recalls the single stratum case in which perverse sheaves are local systems on a manifold and therefore have an algebraic description as modules over the fundamental group algebra. The main construction is in \S\ref{proj perv}, where we explain when there are enough projective perverse sheaves. In \S\ref{fdas} we reformulate the results of \S\ref{proj perv} in terms of  finite-dimensional algebras and also comment on computational approaches.

\section{Background}
\label{background}

\subsection{Topologically stratified spaces}

Throughout  $X$ will be a topologically stratified space in the sense of \cite{gm2}. Briefly, a $0$-dimensional topologically stratified space is a discrete union of points; a strictly positive dimensional $X$ is a paracompact Hausdorff topological space with a finite filtration
\[
\emptyset = X_{-1} \subset X_0 \subset \cdots \subset X_d = X
\]
by closed subsets such that $X_i - X_{i-1}$ is a (possibly empty) $i$-dimensional topological manifold, and such that each $x\in X_i - X_{i-1}$ has an open neighbourhood filtration-preserving homeomorphic to $\R^i \times C(L)$ for some topologically stratified space $L$. Here $C(L) = L \times [0,1)/L\times \{0\}$ is the open cone on $L$ with the induced filtration by the vertex and the subsets $L_i \times [0,1)/L_i\times \{0\}$. The stratified space $L$ is known as {\em a link} of $X_i- X_{i-1}$ at $x$. The links are not part of the data and need not even be well-defined up to homeomorphism \cite{MR319207}. The strata of $X$ are the connected components of the $X_i - X_{i-1}$. They are partially-ordered by the relation $S\leq T \iff S\subset \overline{T}$.

\subsection{The constructible derived category}

Fix a field $k$; we do not assume it has characteristic $0$, nor that it is algebraically closed. The constructible derived category $\constr{c}{X}$ is the full subcategory of the bounded derived category of sheaves of $k$-vector spaces on $X$ on those complexes whose cohomology sheaves are locally-constant on each stratum of $X$. 

Let $\jmath \colon U \hookrightarrow X$ be the inclusion of an open union of strata, and  $\imath \colon Z=X-U \hookrightarrow X$ the complementary closed inclusion. There are triangulated functors 
\[
\begin{tikzcd}
\constr{c}{Z} \ar{rr}{\imath_!=\imath_*} &&  
\constr{c}{X} \ar{rr}{\jmath^!=\jmath^*} \ar[bend right]{ll}[swap]{\imath^*} \ar[bend left]{ll}[swap]{\imath^!}
&& \constr{c}{U} \ar[bend right]{ll}[swap]{\jmath_!} \ar[bend left]{ll}[swap]{\jmath_*}
\end{tikzcd}
\]
where $\imath_!=\imath_*$ is extension by zero from a closed subset, $\jmath^!=\jmath^*$ is restriction to an open subset, $\imath^*$ and $\jmath_!$ are their respective left adjoints, and $\imath^!$ and $\jmath_*$ their respective right adjoints. (The functor $\jmath_*$ is the right derived functor of the usual sheaf theory pushforward, but since we make no use of the latter we do not use the notation $R\jmath_*$.) These functors satisfy various well-known identities, in particular $\jmath^*\imath_*=0$ and the obvious consequences for the adjoints, $\jmath^*\jmath_! = \id = \jmath^*\jmath_*$ and $\imath^!\imath_*= \id = \imath^*\imath_*$. There are two natural exact triangles $\imath_!\imath^!\sh{E} \to \sh{E} \to \jmath_*\jmath^*\sh{E} \to \imath_!\imath^!\sh{E}[1]$ and $\jmath_!\jmath^!\sh{E} \to \sh{E} \to \imath_*\imath^*\sh{E} \to \jmath_!\jmath^!\sh{E}[1]$.
The Verdier dual 
\[
\dual(-) = \Mor{\constr{c}{X}}{-}{\pi^!k_\pt} \colon \constr{c}{X}^\text{op} \to \constr{c}{X}
\]
 is a triangulated equivalence where $\pi \colon X \to \pt$ is the map to a point and $k_\pt$ the constant sheaf with stalk $k$, in degree $0$. It commutes with $\imath_*$ and $\jmath^*$ and there are natural isomorphisms $\dual\circ \imath^* \cong \imath^!\circ \dual$ and $\dual\circ\jmath_! \cong \jmath^*\circ \dual$. The above natural exact triangles are Verdier dual to one another.

\subsection{Perverse sheaves}
A \defn{perversity} on $X$ is function $p \colon \cat{S} \to \Z$ where $\cat{S}$ is the set of strata of $X$. We do not impose any further conditions, although for many applications it is useful to do so. The category $\perv{p}{X}$ of $p$-perverse sheaves is the heart of a bounded $t$-structure on $\constr{c}{X}$ obtained by `glueing' the categories $\loc{S}[-p(S)]$ of shifted local systems on the strata $S$ of $X$ --- see \cite[\S 2]{bbd} for details. It is a $k$-linear, full abelian subcategory of $\constr{c}{X}$. When $X$ has finitely many strata it is a finite length (noetherian and artinian) category.

There is a functor $\pfun{p}{H}^0 \colon \constr{c}{X} \to \perv{p}{X}$ left inverse to the inclusion which is \defn{cohomological}, \ie takes exact triangles to long exact sequences of perverse sheaves. More concretely, perverse sheaves are characterised by the vanishing conditions
\begin{equation}
\label{perverse vanishing conditions}
\sh{E} \in \perv{p}{X} \iff 
\begin{cases}
\sh{H}^d (\imath_S^*\sh{E}) = 0 & d> p(S)\\
\sh{H}^d( \imath_S^!\sh{E})=0 & d<p(S)
\end{cases}
\end{equation}
 for all strata $\imath_S \colon S \hookrightarrow X$, where $\sh{H}^d(\sh{E})$ denotes the cohomology sheaf of the complex of sheaves $\sh{E}$. 

Verdier duality on $\constr{c}{X}$ restricts to an exact equivalence
\[
\dual \colon \perv{p}{X}^\text{op} \to \perv{p^*}{X}
\]
where $p^*(S) = -\dim_\R(S) -p(S)$ is the \defn{dual perversity}. This is a generalisation of the fact that Verdier duality preserves local systems on a manifold $M$ up to a shift; specifically $\dual (\sh{L}) = \sh{L}^\vee[\dim_\R(M)]$ where  $\sh{L}^\vee$ is the dual local system. 

Extension by zero from a closed union of strata and restriction to an open union are t-exact functors. It follows that  $\imath^*$ and $\jmath_!$ are right $t$-exact for the perverse $t$-structure, and that $\imath^!$ and $\jmath_*$ are left $t$-exact. As above, let $\jmath \colon U \hookrightarrow X$ and  $\imath \colon Z=X-U \hookrightarrow X$ be complementary open and closed inclusions. Writing $\pfun{p}{\imath^*} = \pfun{p}{H}^0\circ\imath^*$ and so on there is a diagram of functors
\[
\begin{tikzcd}
\perv{}{Z} \ar{rr}{\imath_!=\imath_*} && 
\perv{}{X} \ar{rr}{\jmath^!=\jmath^*} \ar[bend right]{ll}[swap]{\pfun{p}{\imath^*}} \ar[bend left]{ll}[swap]{\pfun{p}{\imath^!}}
&& \perv{}{U} \ar[bend right]{ll}[swap]{\pfun{p}{\jmath_!}} \ar[bend left]{ll}[swap]{\pfun{p}{\jmath_*}}
\end{tikzcd}
\]
in which $\imath_!=\imath_*$ and $\jmath^!=\jmath^*$ are exact, $\pfun{p}{\imath^*}$ and $\pfun{p}{\jmath_!}$ are their respective left adjoints, and $\pfun{p}{\imath^!}$ and $\pfun{p}{\jmath_*}$ their respective right adjoints. 

\subsection{Simple perverse sheaves}

If $\jmath \colon U \to X$ is the inclusion of an open union of strata, the \defn{intermediate extension} functor $\pfun{p}{\jmath_{!*}} \colon \perv{}{U} \to \perv{}{X}$ is the image of the natural morphism $\pfun{p}{\jmath_!} \to \pfun{p}{\jmath_*}$. It is fully-faithful, preserves both monomorphisms and epimorphisms, but in general is neither left nor right exact. The intermediate extension is the unique extension with no non-zero subobjects or quotients supported on $Z$; equivalently, it is the unique extension satisfying the vanishing conditions (\ref{perverse vanishing conditions}) for the corresponding non-strict inequalities. The simple perverse sheaves are those of the form ${\imath_S}_*\pfun{p}{{\jmath_S}_{!*}}\sh{L}[-p(S)]$ where $\sh{L}$ is an irreducible local system on a stratum $S$ and $\jmath_S \colon S \hookrightarrow \overline{S}$ and $\imath_S \colon \overline{S} \hookrightarrow X$ are the inclusions. These are known as \defn{intersection cohomology complexes} because their cohomology groups are, up to a shift by $p(S)$ in the indexing, the perversity $p$ intersection cohomology groups of the closure $\overline{S}$ with coefficients in $\sh{L}$. We therefore use the notation $\ic{p}{\sh{L}} =  {\imath_S}_*\pfun{p}{{\jmath_S}_{!*}}\sh{L}[-p(S)]$.

\section{Local systems}
\label{local systems}

A stratified space $X$ with a single stratum is a topological manifold. In this case, for any perversity $p$, the perverse sheaves $\perv{p}{X}$ are equivalent to the category $\loc{X}$ of finite-dimensional local systems on  $X$ with coefficients in $k$, \ie the category of locally-constant sheaves of finite-dimensional $k$-vector spaces on $X$. 

Taking  monodromy establishes an equivalence $\loc{X} \simeq \mod{k[\pi_1X]}$ with the category of finite-dimensional left modules over the group algebra of the fundamental group. In particular the question of whether there are enough projective local systems on $X$ depends only on the fundamental group. Clearly, if $\pi_1X$ is finite then $k[\pi_1X]$ is a finite-dimensional $k$-algebra and $\mod{k[\pi_1X]}$ has enough projectives. Generically one has a stronger result --- if $\pi_1X$ is finite and the characteristic of $k$ does not divide its order then Maschke's Theorem implies that $\mod{k[\pi_1X]}$ is a semi-simple category so that all modules are projective. However, if $\pi_1X\cong \Z$ and $k=\C$, for example, then there are no non-zero projective local systems. In this case indecomposable local systems are classified by their Jordan normal forms, none of which is projective. In general the question is quite subtle (and we do not attempt to answer it). To see why recall that there are finitely-presented infinite groups with no non-trivial finite-dimensional representations (over any field $k$). For example a representation of a finitely-presented infinite {\em simple} group $G$ is either trivial or faithful, but the latter is impossible since $G$ cannot embed as a subgroup of $GL_n(k)$. (This is because a  finitely-generated  linear group is residually finite by Mal'cev's Theorem, \ie the intersection of all its finite index normal subgroups is trivial, and so it cannot be simple.) Since any finitely-presented group arises as the fundamental group of a smooth compact $4$-manifold $X$, we can find such an $X$ with $\pi_1X \cong G$ and therefore with  $\loc{X}$ equivalent to the category of finite-dimensional $k$-vector spaces.

After this detour into the intricacies of representation theory the main result of this paper should come as a relief! Roughly, it says that generalising to stratified spaces one does not meet any further subtleties; the existence of projective perverse sheaves depends only on the fundamental groups of the strata.

\section{Projective perverse sheaves}
\label{proj perv}

In this section we assume that the topologically stratified space $X$ has finitely many strata.  Under this assumption, $\perv{p}{X}$ is a Hom-finite length abelian category for any perversity $p$, \ie the Hom-spaces between objects are finite-dimensional vector spaces over $k$, and each object has a finite composition series with simple quotients. Either of these properties implies that $\perv{p}{X}$ is a Krull-Remak-Schidt category \cite[\S 5]{MR3431480} --- each perverse sheaf is a finite direct sum of indecomposable perverse sheaves, and a perverse sheaf is indecomposable precisely when its endomorphism algebra is local. The indecomposable summands of a perverse sheaf, counted with multiplicity, are unique up to isomorphism and reordering. In particular, $\perv{p}{X}$ has enough projectives if and only if each simple object has a projective cover, and the projective covers of the simple objects are precisely the indecomposable projectives \cite[Lemma 3.6]{MR3431480}. We will use the following well-known characterisation of indecomposable projective objects.
\begin{lemma}
\label{proj cover criteria}
Fix a simple perverse sheaf $\,\ic{p}{\sh{L}}$. If $\sh{P}$ satisfies $\ext{1}{\sh{P}}{\ic{p}{\sh{M}}}=0$ for all simple perverse sheaves $\ic{p}{\sh{M}}$  and
\begin{equation}
\label{hom conditions}
\mor{}{\sh{P}}{\ic{p}{\sh{M}}} \cong 
\begin{cases}
k & \text{if}\ \sh{M}\cong \sh{L}\\
0 & \text{otherwise.}
\end{cases}
\end{equation}
then $\sh{P}$ is a projective cover of $\ic{p}{\sh{L}}$.
\end{lemma}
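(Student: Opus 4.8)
The plan is to verify directly the three hallmarks of a projective cover in the finite length, Krull--Remak--Schmidt category $\perv{p}{X}$: that $\sh{P}$ is projective, that its top $\sh{P}/\!\mathrm{rad}(\sh{P})$ is isomorphic to $\ic{p}{\sh{L}}$, and hence that the canonical surjection onto the top is an essential epimorphism.

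First I would promote the $\mathrm{Ext}^1$-vanishing against simples to full projectivity. Since $\perv{p}{X}$ has finite length, every object $\sh{M}$ has a finite composition series with simple subquotients; feeding each short exact sequence of the series into the long exact sequence for $\ext{1}{\sh{P}}{-}$ and inducting on length gives $\ext{1}{\sh{P}}{\sh{M}}=0$ for all $\sh{M}$. As $\mathrm{Ext}^1$ in the heart classifies extensions in $\perv{p}{X}$, its vanishing against every object is exactly projectivity of $\sh{P}$. Next I would compute the top. Any nonzero morphism from $\sh{P}$ to a simple $S$ is surjective with maximal kernel, hence factors through $\sh{P}/\!\mathrm{rad}(\sh{P})$, so $\mor{}{\sh{P}}{S}\cong\mor{}{\sh{P}/\!\mathrm{rad}(\sh{P})}{S}$. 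Decomposing the semisimple top into simples and applying Schur's lemma identifies the right-hand side with $\End{S}^{\oplus m_S}$, where $m_S$ is the multiplicity of $S$ in the top. Condition (\ref{hom conditions}) then forces $m_S=0$ for every $S\not\cong\ic{p}{\sh{L}}$, and for $S\cong\ic{p}{\sh{L}}$ it forces $m_S\cdot\dim_k\End{\ic{p}{\sh{L}}}=1$, whence $m_S=1$ and $\End{\ic{p}{\sh{L}}}=k$. Therefore $\sh{P}/\!\mathrm{rad}(\sh{P})\cong\ic{p}{\sh{L}}$.

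With a simple top in hand, $\sh{P}$ is indecomposable, since any nontrivial direct sum decomposition would split the top into two nonzero summands. The canonical epimorphism $\sh{P}\epic\sh{P}/\!\mathrm{rad}(\sh{P})\cong\ic{p}{\sh{L}}$ has kernel $\mathrm{rad}(\sh{P})$, which is superfluous because in a finite length category every proper subobject is contained in a maximal one; an essential epimorphism from a projective object is by definition a projective cover, so $\sh{P}$ is a projective cover of $\ic{p}{\sh{L}}$. I expect no serious obstacle here, as the argument assembles standard facts about finite length categories; the one point needing care is the dimension bookkeeping above, where one must not assume in advance that the simple $\ic{p}{\sh{L}}$ has endomorphism ring $k$ --- it is precisely the one-dimensionality of $\mor{}{\sh{P}}{\ic{p}{\sh{L}}}$ that simultaneously pins down both the multiplicity and the endomorphism ring.
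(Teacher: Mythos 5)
Your proof is correct and follows essentially the same route as the paper's: projectivity from the $\mathrm{Ext}^1$-vanishing by d\'evissage along composition series, then the Hom conditions forcing $\ic{p}{\sh{L}}$ to be the unique simple quotient (with multiplicity one), hence indecomposability and the projective cover property. You simply spell out, via the radical and the superfluous-kernel argument, the standard facts about finite length Krull--Remak--Schmidt categories that the paper delegates to its cited reference, and your remark that the hypothesis $\mor{}{\sh{P}}{\ic{p}{\sh{L}}}\cong k$ also pins down $\End{\ic{p}{\sh{L}}}=k$ is a correct and worthwhile point of care since $k$ is not assumed algebraically closed.
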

\begin{proof}
The first condition implies that $\sh{P}$ is projective. The second condition implies it has $\ic{p}{\sh{L}}$ as a quotient and that $\sh{P}$ is indecomposable; if it decomposes only one summand can have $\ic{p}{\sh{L}}$ as a quotient, and the other summand has no non-zero quotients at all, hence vanishes. Therefore $\sh{P}$ is an indecomposable projective with $\ic{p}{\sh{L}}$ as a quotient, and so is a projective cover of $\ic{p}{\sh{L}}$.
\end{proof}

\subsection{Restricting and extending projective perverse sheaves}
Let $\jmath \colon U \hookrightarrow X$ be the inclusion of an open union of strata and $\imath \colon Z \hookrightarrow  X$ the complementary closed union. 
\begin{lemma}
\label{proj extensions and restrictions}
The functors $\pfun{p}{\jmath_!}$ and $\pfun{p}{\imath^*}$ preserve projective perverse sheaves. 
 \end{lemma}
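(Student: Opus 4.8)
The plan is to deduce this from the elementary categorical fact that a functor which is left adjoint to an exact functor between abelian categories preserves projective objects. The point is that both of the functors in question appear in the perverse recollement diagram as left adjoints whose right-adjoint partners are exact. Explicitly, $\pfun{p}{\jmath_!}$ is left adjoint to $\jmath^! = \jmath^*$, and $\pfun{p}{\imath^*}$ is left adjoint to $\imath_! = \imath_*$; and both $\jmath^! = \jmath^*$ and $\imath_! = \imath_*$ are exact functors on the perverse hearts, as recorded in the discussion of the recollement. So in each case we have a left adjoint to an exact functor, and it remains only to apply the general principle.

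First I would record that principle precisely. Let $F \colon \sh{A} \to \sh{B}$ be left adjoint to an exact functor $G \colon \sh{B} \to \sh{A}$, and let $\sh{P}$ be projective in $\sh{A}$. The adjunction supplies a natural isomorphism $\Mor{\sh{B}}{F\sh{P}}{-} \cong \Mor{\sh{A}}{\sh{P}}{G(-)}$ of functors $\sh{B} \to \mod{k}$. Given a short exact sequence in $\sh{B}$, applying the exact functor $G$ produces a short exact sequence in $\sh{A}$, and then applying $\Mor{\sh{A}}{\sh{P}}{-}$ keeps it exact because $\sh{P}$ is projective. Hence $\Mor{\sh{B}}{F\sh{P}}{-}$ is exact, that is, $F\sh{P}$ is projective.

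Second, I would simply apply this twice. Taking $F = \pfun{p}{\jmath_!}$ with $G = \jmath^! = \jmath^*$ shows that $\pfun{p}{\jmath_!}$ sends projectives in $\perv{p}{U}$ to projectives in $\perv{p}{X}$; taking $F = \pfun{p}{\imath^*}$ with $G = \imath_! = \imath_*$ shows that $\pfun{p}{\imath^*}$ sends projectives in $\perv{p}{X}$ to projectives in $\perv{p}{Z}$. This completes the argument. There is no serious obstacle here: the whole content is the adjunction together with the exactness of the right-adjoint partners, both of which are already in hand. The only point worth a moment's care is that exactness must hold for the \emph{right} adjoint $G$, not for $F$ itself; this is precisely why the statement singles out $\pfun{p}{\jmath_!}$ and $\pfun{p}{\imath^*}$, since their partners $\jmath^*$ and $\imath_*$ are exact, whereas the remaining recollement functors are only left or right $t$-exact and so cannot be expected to preserve projectivity by this argument.
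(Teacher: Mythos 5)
Your proposal is correct and is exactly the paper's argument: the paper's proof is the single sentence that the left adjoint of an exact functor preserves projectives, applied to the adjunctions $\pfun{p}{\jmath_!} \dashv \jmath^*$ and $\pfun{p}{\imath^*} \dashv \imath_*$ with $\jmath^*$ and $\imath_*$ exact. You have simply spelled out the details of that general principle.
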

\begin{proof}
This follows because the left adjoint of an exact functor preserves projective objects.
\end{proof}
\begin{lemma}
\label{proj restriction}
Suppose $\sh{P}\in \perv{}{X}$ is projective and that $\pfun{p}{\imath^*}\sh{P}=0$. Then $\jmath^*\sh{P}\in \perv{}{U}$ is projective. 
\end{lemma}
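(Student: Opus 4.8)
The plan is to establish projectivity of $\jmath^*\sh{P}$ through the vanishing of first extension groups. An object of an abelian category is projective precisely when $\ext{1}{-}{\sh{Q}}$ vanishes against every object $\sh{Q}$, so it suffices to show $\ext{1}{\jmath^*\sh{P}}{\sh{Q}}=0$ for all $\sh{Q}\in\perv{}{U}$. Since $\perv{}{U}$ is the heart of a $t$-structure, I would first invoke the standard identification of Yoneda $\mathrm{Ext}^1$ in the heart with morphisms in the ambient triangulated category, $\ext{1}{\jmath^*\sh{P}}{\sh{Q}}\cong\Mor{\constr{c}{U}}{\jmath^*\sh{P}}{\sh{Q}[1]}$, and then transport the computation to $X$ using the adjunction $\jmath^*=\jmath^!\dashv\jmath_*$, which yields $\Mor{\constr{c}{U}}{\jmath^*\sh{P}}{\sh{Q}[1]}\cong\Mor{\constr{c}{X}}{\sh{P}}{\jmath_*\sh{Q}[1]}$.

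Next I would resolve $\jmath_*\sh{Q}$ through the perverse $t$-structure. Because $\pfun{p}{\jmath_*}$ is left $t$-exact, $\jmath_*\sh{Q}$ has vanishing perverse cohomology in negative degrees and $\ptrun{p}{\leq 0}\jmath_*\sh{Q}=\pfun{p}{\jmath_*}\sh{Q}$, so there is an exact triangle
\[
\pfun{p}{\jmath_*}\sh{Q}\longrightarrow \jmath_*\sh{Q}\longrightarrow \ptrun{p}{\geq 1}\jmath_*\sh{Q}\longrightarrow \pfun{p}{\jmath_*}\sh{Q}[1].
\]
Applying $\Mor{\constr{c}{X}}{\sh{P}}{-[1]}$ produces a long exact sequence that squeezes the group $\Mor{\constr{c}{X}}{\sh{P}}{\jmath_*\sh{Q}[1]}$ between $\ext{1}{\sh{P}}{\pfun{p}{\jmath_*}\sh{Q}}$ on the left and $\Mor{\constr{c}{X}}{\sh{P}}{\ptrun{p}{\geq 1}\jmath_*\sh{Q}[1]}$ on the right. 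Since $\sh{P}$ is perverse (hence in perverse degree $0$) and $\ptrun{p}{\geq 1}\jmath_*\sh{Q}[1]$ is perversely non-negative, the general fact $\Mor{\constr{c}{X}}{A}{B}\cong\Mor{\perv{}{X}}{\phom{p}{0}(A)}{\phom{p}{0}(B)}$ for $A$ in non-positive and $B$ in non-negative perverse degrees reduces this last group to the Hom in the heart $\Mor{\perv{}{X}}{\sh{P}}{\phom{p}{1}(\jmath_*\sh{Q})}$.

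It then remains to see that both outer terms vanish. The left-hand term is zero because $\sh{P}$ is projective in $\perv{}{X}$. For the right-hand term I would argue that $\phom{p}{1}(\jmath_*\sh{Q})$ is supported on $Z$: as $\jmath^*$ is $t$-exact it commutes with perverse cohomology, and $\jmath^*\jmath_*\sh{Q}=\sh{Q}$ is concentrated in perverse degree $0$, so $\jmath^*\phom{p}{1}(\jmath_*\sh{Q})=0$ and hence $\phom{p}{1}(\jmath_*\sh{Q})\cong\imath_*\sh{G}$ for some $\sh{G}\in\perv{}{Z}$. The adjunction $\pfun{p}{\imath^*}\dashv\imath_*$ now gives $\Mor{\perv{}{X}}{\sh{P}}{\imath_*\sh{G}}\cong\Mor{\perv{}{Z}}{\pfun{p}{\imath^*}\sh{P}}{\sh{G}}=0$, where the hypothesis $\pfun{p}{\imath^*}\sh{P}=0$ is exactly what forces the vanishing. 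With both outer terms zero the squeezed group vanishes, so $\ext{1}{\jmath^*\sh{P}}{\sh{Q}}=0$ for every $\sh{Q}$ and $\jmath^*\sh{P}$ is projective.

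I expect the main obstacle to be the bookkeeping across the two $t$-structures: correctly identifying $\ptrun{p}{\leq 0}\jmath_*\sh{Q}$ with $\pfun{p}{\jmath_*}\sh{Q}$, reducing the shifted derived Hom to a Hom in the heart, and above all establishing that the obstruction term $\phom{p}{1}(\jmath_*\sh{Q})$ is supported on the closed set $Z$. This support statement is the crux, since it is the single point at which the hypothesis $\pfun{p}{\imath^*}\sh{P}=0$ enters and converts projectivity of $\sh{P}$ on $X$ into projectivity of $\jmath^*\sh{P}$ on $U$.
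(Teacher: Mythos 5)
Your proof is correct. It is the derived-category rendering of the same underlying idea as the paper's argument: both transfer the problem to $X$ via the adjunction $\jmath^*\dashv\jmath_*$, and both come down to the observation that the failure of exactness of (perverse) pushforward from $U$ is concentrated on $Z$, where it is annihilated because $\mor{}{\sh{P}}{\imath_*(-)}\cong\mor{}{\pfun{p}{\imath^*}\sh{P}}{-}=0$. The difference is in packaging. The paper stays entirely inside the abelian categories: it shows directly that $\mor{}{\jmath^*\sh{P}}{-}\cong\mor{}{\sh{P}}{\pfun{p}{\jmath_*}(-)}$ is exact, using that for a short exact sequence $0\to\sh{E}\to\sh{F}\to\sh{G}\to0$ in $\perv{}{U}$ the cokernel of $\pfun{p}{\jmath_*}\sh{F}\to\pfun{p}{\jmath_*}\sh{G}$ has the form $\imath_*\sh{C}$. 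You instead prove $\ext{1}{\jmath^*\sh{P}}{\sh{Q}}=0$ by truncating $\jmath_*\sh{Q}$, which trades that cokernel for the obstruction term $\phom{p}{1}(\jmath_*\sh{Q})$ --- the same defect, viewed one step along the long exact sequence of perverse cohomology. Your route needs two extra (standard, and correctly invoked) inputs that the paper sidesteps, namely the identification of Yoneda $\mathrm{Ext}^1$ in a heart with $\mathrm{Hom}(-,-[1])$ in the ambient triangulated category, and the reduction of $\mathrm{Hom}$ from a non-positive to a non-negative object to a $\mathrm{Hom}$ between zeroth perverse cohomologies; in exchange it isolates very cleanly the single point where the hypothesis $\pfun{p}{\imath^*}\sh{P}=0$ is used. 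Both arguments are sound.
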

\begin{proof}
We show that $\mor{}{\jmath^*\sh{P}}{-}\cong \mor{}{\sh{P}}{\pfun{p}{\jmath_*}(-)}$ is exact. Given short exact $0\to \sh{E}\to\sh{F}\to\sh{G}\to 0$ in $\perv{}{U}$ the cokernel of $\pfun{p}{\jmath_*}\sh{F}\to\pfun{p}{\jmath_*}\sh{G}$ is supported on $Z$ so that there is an exact sequence $0\to \pfun{p}{\jmath_*}\sh{E}\to\pfun{p}{\jmath_*}\sh{F}\to\pfun{p}{\jmath_*}\sh{G}\to \imath_*\sh{C}\to 0$  for some $\sh{C}\in \perv{}{Z}$. Applying the exact functor $\mor{}{\sh{P}}{-}$ we obtain a short exact sequence
\[
0 \to 
\mor{}{\sh{P}}{\pfun{p}{\jmath_*}\sh{E}}\to
\mor{}{\sh{P}}{\pfun{p}{\jmath_*}\sh{F}}\to
\mor{}{\sh{P}}{\pfun{p}{\jmath_*}\sh{G}}\to0
\]
since $\mor{}{\sh{P}}{\imath_*\sh{C}} \cong \mor{}{\pfun{p}{\imath^*}\sh{P}}{\sh{C}} =0$.
\end{proof}

A simple perverse sheaf on $X$ is either the intermediate extension $\pfun{p}{\jmath_{!*}}\sh{E}$ of a simple perverse sheaf on $U$ or the extension by zero $\imath_*\sh{E}$ of a simple perverse sheaf on $Z$. This establishes a correspondence between the set of isomorphism classes of simple perverse sheaves on $X$ and the union of the sets of isomorphism classes of perverse sheaves on $Z$ and on $U$. 

\begin{lemma}
\label{preserves proj covers}
\begin{enumerate}
\item Suppose $\sh{E}\in \perv{p}{U}$ is simple and $\sh{P}$ is a projective cover of $\sh{E}$. Then $\pfun{p}{\jmath_!}\sh{P}$ is a projective cover of the intermediate extension $\pfun{p}{\jmath_{!*}}\sh{E}$.
\item Suppose $\sh{E}\in \perv{p}{U}$ is simple and $\sh{P}$ is a projective cover of its intermediate extension $\pfun{p}{\jmath_{!*}}\sh{E}$. Then $\jmath^*\sh{P}$ is a projective cover of $\sh{E}$.
\item Suppose $\sh{E} \in \perv{p}{Z}$ is simple and $\sh{P}$ is a projective cover of its extension by zero $\imath_*\sh{E}$. Then $\pfun{p}{\imath^*}\sh{P}$ is a projective cover of $\sh{E}$.
\end{enumerate}
\end{lemma}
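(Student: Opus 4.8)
The plan is to verify each part with the criterion of Lemma~\ref{proj cover criteria}, exploiting the observation recorded just before the statement: every simple perverse sheaf on $X$ is either $\pfun{p}{\jmath_{!*}}\sh{F}$ for a simple $\sh{F}$ on $U$, or $\imath_*\sh{G}$ for a simple $\sh{G}$ on $Z$. Since $X$ has finitely many strata, so do $U$ and $Z$, hence $\perv{p}{U}$ and $\perv{p}{Z}$ are Hom-finite finite length categories and Lemma~\ref{proj cover criteria} applies verbatim to each; moreover for a projective object the $\mathrm{Ext}^1$-vanishing in that lemma is automatic, so among projective objects being a projective cover of a given simple is equivalent to the Hom-conditions (\ref{hom conditions}) (the converse to the lemma holds because a projective cover has simple head). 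Thus in every part the functor applied to $\sh{P}$ is projective by Lemma~\ref{proj extensions and restrictions} or Lemma~\ref{proj restriction}, and all that remains is to compute its Hom into each simple by adjunction and read off the answer from the cover hypothesis on $\sh{P}$. I will need the identities $\jmath^*\imath_*=0$ and $\jmath^*\pfun{p}{\jmath_{!*}}\cong\id$ (the latter because $\jmath^*$ is exact and restricts both $\pfun{p}{\jmath_!}$ and $\pfun{p}{\jmath_*}$ to the identity, hence restricts their image $\pfun{p}{\jmath_{!*}}$ to the identity), together with the full faithfulness of $\imath_*$ and of $\pfun{p}{\jmath_{!*}}$.

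For (1) and (3) this is immediate. In (1), $\pfun{p}{\jmath_!}\sh{P}$ is projective, and for a simple $\sh{S}$ on $X$ adjunction gives $\mor{}{\pfun{p}{\jmath_!}\sh{P}}{\sh{S}}\cong\mor{}{\sh{P}}{\jmath^*\sh{S}}$; this vanishes when $\sh{S}=\imath_*\sh{G}$ since $\jmath^*\imath_*=0$, and equals $\mor{}{\sh{P}}{\sh{F}}$ when $\sh{S}=\pfun{p}{\jmath_{!*}}\sh{F}$ since $\jmath^*\pfun{p}{\jmath_{!*}}\sh{F}\cong\sh{F}$. The cover hypothesis on $\sh{P}$ then gives $k$ exactly when $\sh{F}\cong\sh{E}$, i.e.\ exactly when $\sh{S}\cong\pfun{p}{\jmath_{!*}}\sh{E}$, so Lemma~\ref{proj cover criteria} applies. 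In (3), $\pfun{p}{\imath^*}\sh{P}$ is projective and $\mor{}{\pfun{p}{\imath^*}\sh{P}}{\sh{G}}\cong\mor{}{\sh{P}}{\imath_*\sh{G}}$, which by the cover hypothesis is $k$ exactly when $\imath_*\sh{G}\cong\imath_*\sh{E}$, i.e.\ when $\sh{G}\cong\sh{E}$.

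Part (2) is the one requiring real work, and I expect it to be the main obstacle. The first task is to show $\pfun{p}{\imath^*}\sh{P}=0$: for every simple $\sh{G}$ on $Z$ we have $\mor{}{\pfun{p}{\imath^*}\sh{P}}{\sh{G}}\cong\mor{}{\sh{P}}{\imath_*\sh{G}}=0$, because $\imath_*\sh{G}$ is a simple on $X$ distinct from $\pfun{p}{\jmath_{!*}}\sh{E}$ and $\sh{P}$ is the cover of the latter; as $\perv{p}{Z}$ has finite length, an object with no simple quotient must vanish. With $\pfun{p}{\imath^*}\sh{P}=0$ in hand, Lemma~\ref{proj restriction} shows $\jmath^*\sh{P}$ is projective. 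It then remains to compute $\mor{}{\jmath^*\sh{P}}{\sh{F}}\cong\mor{}{\sh{P}}{\pfun{p}{\jmath_*}\sh{F}}$ for each simple $\sh{F}$ on $U$. Here I would use the short exact sequence $0\to\pfun{p}{\jmath_{!*}}\sh{F}\to\pfun{p}{\jmath_*}\sh{F}\to\imath_*\sh{C}\to0$, whose cokernel is supported on $Z$ because $\jmath^*$ is exact and kills it; applying $\mor{}{\sh{P}}{-}$ and using both projectivity of $\sh{P}$ (to remove the $\mathrm{Ext}^1$ term) and $\pfun{p}{\imath^*}\sh{P}=0$ (to kill $\mor{}{\sh{P}}{\imath_*\sh{C}}$) identifies $\mor{}{\sh{P}}{\pfun{p}{\jmath_*}\sh{F}}$ with $\mor{}{\sh{P}}{\pfun{p}{\jmath_{!*}}\sh{F}}$. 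The latter is $k$ exactly when $\sh{F}\cong\sh{E}$ by the cover hypothesis, so Lemma~\ref{proj cover criteria} again applies and finishes the argument. This passage through the costandard sequence, combined with the two annihilation facts, is the delicate point of the whole lemma.

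I would also remark that, once (1) is established, uniqueness of projective covers gives a shortcut to (2): if $\sh{E}$ is known to admit a cover $\sh{Q}$ on $U$, then $\pfun{p}{\jmath_!}\sh{Q}$ is a cover of $\pfun{p}{\jmath_{!*}}\sh{E}$ by (1), so $\sh{P}\cong\pfun{p}{\jmath_!}\sh{Q}$ and $\jmath^*\sh{P}\cong\jmath^*\pfun{p}{\jmath_!}\sh{Q}\cong\sh{Q}$. The direct argument above is preferable precisely because it does not presuppose the existence of a cover on $U$ but instead produces it.
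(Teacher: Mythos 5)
Your proposal is correct and follows essentially the same route as the paper: verify the criterion of Lemma~\ref{proj cover criteria} in each case, using Lemmas~\ref{proj extensions and restrictions} and~\ref{proj restriction} for projectivity, the relevant adjunctions for the Hom-conditions, and, in part (2), first deducing $\pfun{p}{\imath^*}\sh{P}=0$ and then passing through the short exact sequence $0\to\pfun{p}{\jmath_{!*}}\sh{F}\to\pfun{p}{\jmath_*}\sh{F}\to\imath_*\sh{F}'\to0$. Your version merely spells out details the paper leaves implicit (e.g.\ $\jmath^*\pfun{p}{\jmath_{!*}}\cong\id$ and the converse direction of Lemma~\ref{proj cover criteria} for projective covers), so no further comparison is needed.
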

\begin{proof}
In each case we apply Lemma \ref{proj cover criteria}. 
\begin{enumerate}
\item  By Lemma \ref{proj extensions and restrictions} the extension $\pfun{p}{\jmath_!}\sh{P}$ is projective, so the Ext condition in Lemma \ref{proj cover criteria} holds. The Hom conditions in (\ref{hom conditions})  follow from the adjunction between $\pfun{p}{\jmath_!}$ and $\jmath^*$.
\item Since $\mor{}{\sh{P}}{\imath_*\sh{F}}=0$ for all simple $\sh{F} \in \perv{p}{Z}$ we have $\pfun{p}{\imath^*}\sh{P}=0$. Hence $\jmath^*\sh{P}$ is projective by Lemma \ref{proj restriction}, so the Ext condition in Lemma \ref{proj cover criteria} holds. The Hom conditions in (\ref{hom conditions}) follow from the adjunction between $\jmath^*$ and $\pfun{p}{\jmath_{*}}$ and the fact that for simple $\sh{F}\in \perv{p}{U}$ there is a short exact sequence $0\to \pfun{p}{\jmath_{!*}}\sh{F} \to \pfun{p}{\jmath_{*}}\sh{F} \to \imath_*\sh{F}' \to 0$ with $\sh{F}'\in \perv{p}{Z}$.
\item By Lemma \ref{proj extensions and restrictions} the restriction $\pfun{p}{\imath^*}\sh{P}$ is projective, so the Ext condition in Lemma \ref{proj cover criteria} holds. The Hom conditions in (\ref{hom conditions})  follow from the adjunction between $\pfun{p}{\imath^*}$ and $\imath_*$. \qedhere
\end{enumerate}
\end{proof}
In the next section we establish a sufficient criterion for $\perv{p}{X}$ to have enough projectives. Here we note  a  necessary condition implied by  the previous lemma.
\begin{corollary}
\label{necessary}
Suppose $X$ is a topologically stratified space with finitely many strata and $p$ a perversity. If $\perv{p}{X}$ has enough projectives then so does $\loc{S}$ for each stratum $S$. 
\end{corollary}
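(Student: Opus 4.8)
The plan is to reduce to the single-stratum results by means of the recollement lemmas already established. Since $\perv{p}{X}$ and each $\loc{S} \simeq \perv{p}{S}$ are Hom-finite length categories, each has enough projectives precisely when every simple object admits a projective cover. As the simple objects of $\loc{S}$ are the irreducible local systems, it therefore suffices to produce, for each irreducible local system $\sh{L}$ on a fixed stratum $S$, a projective cover of $\sh{L}$ in $\loc{S}$, starting from a projective cover of the associated simple perverse sheaf $\ic{p}{\sh{L}}$ in $\perv{p}{X}$, which exists by hypothesis.

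The idea is to isolate $S$ by a two-step localisation in which $S$ is first exhibited as a closed stratum of an open union of strata. First I would set $U = \{T : T \geq S\}$, the order filter generated by $S$; one checks that this is an open union of strata and that $S$ is minimal in it, hence closed in $U$, with $\overline{S}\cap U = S$. Writing $\jmath \colon U \hookrightarrow X$, the simple perverse sheaf $\ic{p}{\sh{L}}$ has support $\overline{S}$ meeting $U$, so it is not supported on $X - U$; by the discussion preceding Lemma \ref{preserves proj covers} it is thus the intermediate extension $\pfun{p}{\jmath_{!*}}\sh{E}$ of the simple object $\sh{E} = \jmath^* \ic{p}{\sh{L}} \in \perv{p}{U}$. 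Applying Lemma \ref{preserves proj covers}(2) to the given projective cover $\sh{P}$ of $\ic{p}{\sh{L}}$ then shows that $\jmath^*\sh{P}$ is a projective cover of $\sh{E}$.

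Second, I would invoke the closed inclusion $\imath \colon S \hookrightarrow U$. Since $S$ is closed in $U$ and is a single stratum, $\sh{E}$ is the extension by zero $\imath_*(\sh{L}[-p(S)])$ of the simple object $\sh{L}[-p(S)] \in \perv{p}{S}$. Applying Lemma \ref{preserves proj covers}(3) to the projective cover $\jmath^*\sh{P}$ of $\sh{E}$ shows that $\pfun{p}{\imath^*}\jmath^*\sh{P}$ is a projective cover of $\sh{L}[-p(S)]$ in $\perv{p}{S}$. Transporting along the equivalence $\perv{p}{S} \simeq \loc{S}$, which is the shift by $-p(S)$ and identifies $\sh{L}[-p(S)]$ with $\sh{L}$, yields the desired projective cover of $\sh{L}$. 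As $\sh{L}$ was an arbitrary irreducible local system, every simple object of $\loc{S}$ has a projective cover and so $\loc{S}$ has enough projectives.

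I expect the only real work to lie in the bookkeeping of the first step: verifying that $U$ is open, that $S$ is closed in $U$, and --- most importantly --- that $\ic{p}{\sh{L}}$ is genuinely the intermediate extension of its restriction $\sh{E}$, so that Lemma \ref{preserves proj covers}(2) applies. Once $S$ has been isolated as a closed stratum in an open union of strata, parts (2) and (3) of Lemma \ref{preserves proj covers} do the rest, and, reassuringly, no analysis of how the remaining strata are assembled is required.
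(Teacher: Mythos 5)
Your proof is correct, and it rests on the same key lemma as the paper's (Lemma \ref{preserves proj covers}), but the localisation is run in the opposite order. The paper factors the inclusion of $S$ through its closure, $S \hookrightarrow \overline{S} \hookrightarrow X$, first applying part (3) to the closed inclusion $\overline{S} \hookrightarrow X$ (since $\ic{p}{\sh{L}}$ is by definition pushed forward from $\overline{S}$) and then part (2) to the open stratum $S \subset \overline{S}$; you instead factor through the open star $U = \{T : T \geq S\}$, applying part (2) first and part (3) second. The two routes are exactly dual and cost the same: your version requires checking that $U$ is open, that $\overline{S}\cap U = S$, and that $\ic{p}{\sh{L}}$ is the intermediate extension of its restriction to $U$ (which holds because a simple object has no nonzero subobjects or quotients at all, in particular none supported on $X-U$), while the paper's version gets the first step for free from the formula $\ic{p}{\sh{L}} = {\imath_S}_*\pfun{p}{{\jmath_S}_{!*}}\sh{L}[-p(S)]$ and only needs that $S$ is open and dense in $\overline{S}$. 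All the verifications you flag as ``the only real work'' do go through, so the argument is complete.
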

\begin{proof}
Let $\sh{E}\in \loc{S}$ be irreducible and suppose that $\sh{P}$ is a projective cover of $\ic{p}{\sh{E}}$ in $\perv{p}{X}$. Lemma  \ref{preserves proj covers} implies that ${\jmath_S}^*\pfun{p}{\imath_S^*}\sh{P}$ (shifted by $p(S)$) is a projective cover of $\sh{E}$ in $\loc{S}$ where $\jmath_S \colon S \hookrightarrow \overline{S}$ and $\imath_S \colon \overline{S} \hookrightarrow X$ are the inclusions.
\end{proof}

\subsection{Existence of projectives}

The next result establishes a sufficient condition for $\perv{p}{X}$ to have enough projectives. The delicate part is the construction of a projective cover of a simple perverse sheaf supported on a closed stratum. This construction generalises that of \cite[Thm 3.2.1]{BGS}. It mimics the construction of a projective cover of simple a module for the path algebra of a quiver.
\begin{theorem}
\label{sufficient}
Let $X$ be a topologically stratified space and $p$ a perversity. Suppose $X$ has finitely many strata and that $\loc{S}$ has finitely many (isomorphism classes of) simple objects and enough projectives for each stratum $S$. Then $\perv{p}{X}$ also has enough projectives.
\end{theorem}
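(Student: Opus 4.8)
The plan is to induct on the number of strata, using the correspondence recalled just before Lemma~\ref{preserves proj covers} between simple perverse sheaves on $X$ and the simples on $Z$ and on $U$. Since $\perv{p}{X}$ is Krull--Remak--Schmidt it suffices to produce a projective cover of each simple object. Fix a closed stratum $Z=S$ and let $\jmath\colon U\hookrightarrow X$ be the complementary open union of strata; then $U$ has one fewer stratum and inherits the hypotheses of the theorem, so by induction $\perv{p}{U}$ has enough projectives, while $\perv{p}{Z}\simeq\loc{S}$ (up to the shift by $p(S)$) has enough projectives and finitely many simples by assumption. The base case, $X$ a single stratum, is precisely the statement that $\loc{X}$ has enough projectives. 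Now the simples of $\perv{p}{X}$ are of two kinds: the intermediate extensions $\pfun{p}{\jmath_{!*}}\sh{M}$ of simples $\sh{M}$ on $U$, and the extensions by zero $\imath_*\sh{E}$ of simples $\sh{E}$ on $Z$. For the first kind Lemma~\ref{preserves proj covers}(1) already provides a cover, namely $\pfun{p}{\jmath_!}$ applied to a cover of $\sh{M}$ on $U$, so the whole problem reduces to constructing a projective cover of $\imath_*\sh{E}$.

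Let $\sh{Q}$ be a projective cover of $\sh{E}$ in $\perv{p}{Z}$. I would first record that $\imath_*\sh{Q}$ already satisfies two thirds of the criterion of Lemma~\ref{proj cover criteria}. Because $\imath_*$ is fully faithful with essential image a Serre subcategory of $\perv{p}{X}$ (supports are closed under extension), one has $\mor{}{\imath_*\sh{Q}}{\imath_*\sh{M}}\cong\mor{}{\sh{Q}}{\sh{M}}$ and $\ext{1}{\imath_*\sh{Q}}{\imath_*\sh{M}}\cong\ext{1}{\sh{Q}}{\sh{M}}=0$ for every simple $\sh{M}$ on $Z$, while $\mor{}{\imath_*\sh{Q}}{\pfun{p}{\jmath_{!*}}\sh{M}}=0$ for every simple $\sh{M}$ on $U$ since the intermediate extension has no nonzero subobject supported on $Z$. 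Thus $\imath_*\sh{Q}$ has the right Hom spaces and the right Ext vanishing against $Z$-simples; the only obstruction to projectivity is the family of finite-dimensional spaces $\ext{1}{\imath_*\sh{Q}}{\pfun{p}{\jmath_{!*}}\sh{M}}$ indexed by the finitely many simples $\sh{M}$ on $U$.

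To remove this obstruction I would form a universal extension, mimicking the path-algebra construction of \cite[Thm~3.2.1]{BGS}. Writing $P_U(\sh{M})$ for the projective cover of $\sh{M}$ in $\perv{p}{U}$, set
\[
\sh{R}=\bigoplus_{\sh{M}} P_U(\sh{M})\otimes_{\End{\sh{M}}}\ext{1}{\imath_*\sh{Q}}{\pfun{p}{\jmath_{!*}}\sh{M}},
\]
the sum over the finitely many simples $\sh{M}$ on $U$, and consider an extension $0\to\pfun{p}{\jmath_!}\sh{R}\to\sh{P}\to\imath_*\sh{Q}\to0$. Here $\pfun{p}{\jmath_!}\sh{R}$ is projective by Lemma~\ref{proj extensions and restrictions}, and the identities $\jmath^*\imath_*=0$ and $\jmath^*\pfun{p}{\jmath_{!*}}\sh{M}=\sh{M}$ give $\mor{}{\pfun{p}{\jmath_!}\sh{R}}{\imath_*\sh{M}}=0$ and $\mor{}{\pfun{p}{\jmath_!}\sh{R}}{\pfun{p}{\jmath_{!*}}\sh{M}}\cong\mor{}{\sh{R}}{\sh{M}}\cong\ext{1}{\imath_*\sh{Q}}{\pfun{p}{\jmath_{!*}}\sh{M}}$, so source and target of each relevant map have equal finite dimension. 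Applying $\mor{}{-}{\ic{p}{\sh{M}}}$ to the short exact sequence and using projectivity of $\pfun{p}{\jmath_!}\sh{R}$, one checks that the $Z$-simples contribute nothing new, and that for $U$-simples both the Hom condition $(\ref{hom conditions})$ and the vanishing of $\ext{1}{\sh{P}}{\pfun{p}{\jmath_{!*}}\sh{M}}$ hold exactly when, for each $\sh{M}$, the connecting homomorphism
\[
\mor{}{\pfun{p}{\jmath_!}\sh{R}}{\pfun{p}{\jmath_{!*}}\sh{M}}\longrightarrow\ext{1}{\imath_*\sh{Q}}{\pfun{p}{\jmath_{!*}}\sh{M}}
\]
is an isomorphism: its cokernel is $\ext{1}{\sh{P}}{\pfun{p}{\jmath_{!*}}\sh{M}}$ and its kernel is the unwanted part of $\mor{}{\sh{P}}{\pfun{p}{\jmath_{!*}}\sh{M}}$. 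Lemma~\ref{proj cover criteria} would then identify $\sh{P}$ as the desired cover.

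The main obstacle is therefore to produce an extension class $e\in\ext{1}{\imath_*\sh{Q}}{\pfun{p}{\jmath_!}\sh{R}}$ realizing all these connecting maps as the canonical isomorphisms simultaneously. This is delicate because $\pfun{p}{\jmath_!}\sh{R}$ is only the zeroth perverse cohomology of the derived extension $\jmath_!\sh{R}$, whose lower perverse cohomology is supported on $Z$; computing $\ext{1}{\imath_*\sh{Q}}{\pfun{p}{\jmath_!}\sh{R}}$ as a morphism space in $\constr{c}{X}$ and unwinding it through the recollement triangles reduces, after adjunction, to the attaching data $\imath^!\jmath_!\sh{R}$ (equivalently the costalk $\imath^*\jmath_*\sh{R}$) of $\sh{R}$ along $Z$. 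The crux is to show that the comparison map from this $\ext{1}$ to $\prod_{\sh{M}}\ext{1}{\imath_*\sh{Q}}{\pfun{p}{\jmath_{!*}}\sh{M}}$, induced by the cover $\sh{R}\twoheadrightarrow\bigoplus_{\sh{M}}\sh{M}^{\oplus}$, hits the tautological isomorphism; this is exactly where the projectivity of $\pfun{p}{\jmath_!}\sh{R}$ on $X$ and the enough-projectives hypothesis on $U$ must be leveraged, and where the genuine work of the theorem lies.
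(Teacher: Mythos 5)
Your reduction is exactly the paper's: induct on the number of strata, peel off a closed stratum $S$, handle the simples coming from $U=X-S$ via Lemma \ref{preserves proj covers}(1), and concentrate on a simple $\imath_*\sh{E}$ supported on $S$. Your universal-extension set-up is also essentially the paper's: your $\imath_*\sh{Q}$ is the paper's $\sh{B}_\sh{L}$, and your $\pfun{p}{\jmath_!}\sh{R}$ is (up to replacing each $\mathrm{Ext}^1$ by its dual as a multiplicity space) the paper's projective $\sh{P}=\bigoplus_{\sh{M}\in I}\proj{}{\sh{M}}\otimes\ext{1}{\sh{B}_\sh{L}}{\ic{p}{\sh{M}}}^{\vee}$, and your long-exact-sequence analysis of what the extension class must achieve is correct. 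But the step you defer as ``where the genuine work lies'' --- producing $e\in\ext{1}{\imath_*\sh{Q}}{\pfun{p}{\jmath_!}\sh{R}}$ whose connecting maps $\mor{}{\pfun{p}{\jmath_!}\sh{R}}{\pfun{p}{\jmath_{!*}}\sh{M}}\to\ext{1}{\imath_*\sh{Q}}{\pfun{p}{\jmath_{!*}}\sh{M}}$ are all isomorphisms --- is not just unfinished; as stated it need not be achievable. The tautological class lives on the semisimple quotient $\sh{P}/\mathrm{rad}\,\sh{P}$, and the obstruction to lifting it along $\ext{1}{\sh{B}_\sh{L}}{\sh{P}}\to\ext{1}{\sh{B}_\sh{L}}{\sh{P}/\mathrm{rad}\,\sh{P}}$ lies in $\ext{2}{\sh{B}_\sh{L}}{\mathrm{rad}\,\sh{P}}$, which has no reason to vanish. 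So demanding the class on the full projective $\pfun{p}{\jmath_!}\sh{R}$ is too much.

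The paper's construction is designed precisely to dodge this. It takes a quotient $\pi\colon\sh{P}\epic\sh{Q}$ of \emph{maximal length} among those admitting a class $\epsilon\in\ext{1}{\sh{B}_\sh{L}}{\sh{Q}}$ inducing the isomorphisms (\ref{q surj}); such a quotient always exists because the semisimple quotient $\bigoplus_{\sh{M}\in I}\ic{p}{\sh{M}}\otimes\ext{1}{\sh{B}_\sh{L}}{\ic{p}{\sh{M}}}^{\vee}$ works. The price is that $\sh{Q}$ is no longer projective, so the vanishing of $\ext{1}{\sh{P}_\sh{L}}{\ic{p}{\sh{N}}}$ is no longer automatic from your long exact sequence; it is recovered by a contradiction argument in which a nonzero class in the kernel of $\ext{1}{\sh{Q}}{\ic{p}{\sh{N}}}\to\ext{2}{\sh{B}_\sh{L}}{\ic{p}{\sh{N}}}$ is used to manufacture a strictly longer quotient $\sh{Q}'$ of $\sh{P}$ still satisfying (\ref{q surj}), contradicting maximality. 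This trade --- maximality of the quotient in place of projectivity of the kernel --- is the missing idea; without it (or a genuinely new argument that your class $e$ exists on the full $\pfun{p}{\jmath_!}\sh{R}$, which would in fact simplify the paper's proof) the proposal does not close.
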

\begin{proof}
It is enough to construct a projective cover for each simple perverse sheaf. We do so by induction over the number of strata. When $X$ has a single stratum there is nothing to prove. Suppose $S$ is a closed stratum and let $\imath \colon S \hookrightarrow X$ and $\jmath \colon X-S \hookrightarrow X$ be the inclusions. Since $X-S$ has strictly fewer strata we may assume that each simple $\jmath^*\ic{p}{\sh{M}}$  has a projective cover $\sh{A}_\sh{M}$ in $\perv{p}{X-S}$. Then $\sh{P}_\sh{M}=\pfun{p}{\jmath_!}\sh{A}_\sh{M}$ is a projective cover of $\ic{p}{\sh{M}}$  by Lemma \ref{preserves proj covers}. 

It remains to construct a projective cover for $\ic{p}{\sh{L}}$ where $\sh{L}$ is an irreducible local system on $S$. By assumption $\sh{L}$ has a projective cover in $\loc{S}$. Let $\sh{B}_\sh{L}$ denote the perverse sheaf obtained by shifting this cover by $[-p(S)]$ and extending by zero. Let $I$ be the union of the sets of (isomorphism classes of) irreducible local systems on the strata of $X-S$. Since $I$ is finite 
\[
\sh{P} =  \bigoplus_{\sh{M} \in I} \proj{}{\sh{M}} \otimes \ext{1}{\sh{B}_\sh{L}}{\ic{p}{\sh{M}}}^\vee
\]
is a well-defined and projective perverse sheaf. Let $\pi \colon \sh{P}\epic \sh{Q}$ be such that $\sh{Q}$ has maximal length amongst quotients for which there exists $\epsilon \in \ext{1}{ \sh{B}_\sh{L}}{ \sh{Q}}$ inducing isomorphisms
\begin{equation}
\label{q surj}
\mor{}{\sh{Q}}{\ic{p}{\sh{N}}} \cong \ext{1}{\sh{B}_\sh{L}}{ \ic{p}{\sh{N}}} \colon \varphi \mapsto \varphi \circ \epsilon
\end{equation}
for each $\sh{N}\in I$. Such a $\sh{Q}$ exists because $\sh{P}$ has finite length and the quotient
\[
 \bigoplus_{\sh{M}\in I} \ic{p}{\sh{M}} \otimes \ext{1}{\sh{B}_\sh{L}}{\ic{p}{\sh{M}}}^\vee
\]
has the required property --- a suitable choice of $\epsilon$  in this case is the sum of the units in
\[
 \bigoplus_{\sh{M}\in I} \ext{1}{\sh{B}_\sh{L}}{\ic{p}{\sh{M}}} \otimes \ext{1}{\sh{B}_\sh{L}}{\ic{p}{\sh{M}}}^\vee.
\]
Let $\proj{}{\sh{L}} \in \perv{}{X}$ be defined (up to isomorphism) by the triangle
\begin{equation}
\label{proj triangle}
\sh{Q}\to \proj{}{\sh{L}} \to  \sh{B}_\sh{L} \stackrel{\epsilon}{\longrightarrow}  \sh{Q}[1].
\end{equation}
We prove that $\proj{}{\sh{L}}$ is the projective cover of $\ic{p}{\sh{L}}$ using Lemma \ref{proj cover criteria}. Apply $\mor{}{-}{\ic{p}{\sh{N}}}$ to (\ref{proj triangle}) to obtain a long exact sequence. The property (\ref{q surj}) implies this splits into an isomorphism $\mor{}{ \proj{}{\sh{L}} }{ \ic{p}{\sh{N}} } \cong \mor{}{  \sh{B}_\sh{L} }{\ic{p}{\sh{N}}} \cong \mor{}{  \sh{B}_\sh{L}}{\pfun{p}{\imath^*} \ic{p}{\sh{N}} }$ and an exact sequence
\begin{equation}
\label{ext seq}
0
 \to \ext{1}{\sh{P}_\sh{L}}{\ic{p}{\sh{N}}} 
 \to \ext{1}{\sh{Q}}{\ic{p}{\sh{N}}} 
 \to \ext{2}{ \sh{B}_\sh{L}}{ \ic{p}{\sh{N}}}\to  \cdots,  
 \end{equation}
 where the Ext-groups are computed in $\constr{c}{X}$ rather than in $\perv{p}{X}$. Since
 \[
\pfun{p}{\imath^*} \ic{p}{\sh{N}} \cong
\begin{cases}
\sh{N}[-p(S)] & \text{if}\ \sh{N} \in \loc{S} \\
0 & \text{otherwise},
\end{cases}
\]
 the only simple quotient of $\sh{P}_\sh{L}$ is $\ic{p}{L}$, and this occurs with multiplicity one. Hence $\sh{P}_\sh{L}$ is indecomposable. To show $\sh{P}_\sh{L}$ is projective it suffices to prove that the third map in (\ref{ext seq}) is injective. Suppose $0\neq \varphi \in \ext{1}{\sh{Q}}{\ic{p}{\sh{N}}}$ is in the kernel, \ie $\varphi \circ \epsilon[-1]=0$. Then we have a commutative diagram 
\[
\begin{tikzcd}
{}&& \sh{P} \ar[->>]{d}{\pi} \ar[dashed,swap]{dl}{\pi'} \ar{dr}{0}&\\
\ic{p}{\sh{N}} \ar{r} & \sh{Q}' \ar{r} &\sh{Q} \ar{r}{\varphi} & \ic{p}{\sh{N}}[1]\\
&&   \sh{B}_\sh{L}[-1] \ar{u}[swap]{\epsilon[-1]}\ar[dashed]{ul}{\epsilon'[-1]} \ar{ur}[swap]{0}&
\end{tikzcd}
\]
in $\constr{c}{X}$ whose middle row is the triangle induced from $\varphi$. The composite $\varphi\circ \pi=0$ because $\sh{P}$ is projective. Therefore there are factorisations via $\pi'$ and $\epsilon'[-1]$ as indicated. By construction $\sh{Q}'$ is a perverse sheaf of greater length than $\sh{Q}$. Applying $\mor{}{-}{\ic{p}{\sh{M}}}$ to the triangle induced by $\varphi$ yields a long exact sequence
\[
0 
\to \mor{}{\sh{Q}}{\ic{p}{\sh{M}}} 
\to \mor{}{\sh{Q}'}{\ic{p}{\sh{M}}}   
\to \mor{}{\ic{p}{\sh{N}}}{\ic{p}{\sh{M}}} 
\to \ext{1}{\sh{Q}}{\ic{p}{\sh{M}}} 
\to \cdots.
\]
Since the third term vanishes for $\sh{M}\neq \sh{N}$, and injects into the fourth when $\sh{M}= \sh{N}$ because $\varphi\neq 0$, we conclude that $\mor{}{\sh{Q}}{\ic{p}{\sh{M}}} \cong \mor{}{\sh{Q}'}{\ic{p}{\sh{M}}}$. Therefore composition with $\epsilon'$ induces an isomorphism
\[
\mor{}{\sh{Q}'}{\ic{p}{\sh{M}}} \cong  \ext{1}{\sh{B}_\sh{L} }{\ic{p}{\sh{M}}} 
\]
for any $\sh{M}\in I$. Thus $\pi'$ cannot be an epimorphism of perverse sheaves, for otherwise $\sh{Q}$ would not be the maximal length quotient of $\sh{P}$ satisfying (\ref{q surj}). Since $\ic{p}{\sh{N}}$ is simple  $\im\, \pi' \cong \sh{Q}$ and $\sh{Q}'\to \sh{Q}$ splits in $\perv{p}{X}$ contradicting the fact that $\varphi\neq 0$. We conclude that the third map in (\ref{ext seq}) is injective, and this completes the proof.
\end{proof}
\begin{remark}
\label{identifying Q}
It is not clear {\em a priori} that the quotient $\sh{Q}$ appearing in the proof is unique (up to isomorphism), however {\em a posteriori} we see that it is. The short exact sequence $0 \to \sh{Q} \to \sh{P}_\sh{L} \to \sh{B}_\sh{L}\to 0$ and the fact that Lemma \ref{proj extensions and restrictions} implies $\sh{B}_\sh{L} \cong \imath_*\pfun{p}{\imath^*}\sh{P}_\sh{L}$  show that $\sh{Q} \cong \pfun{p}{\jmath_!}\jmath^*\sh{P}_\sh{L}$.
\end{remark}

What is surprising about this result is that the existence of enough projective perverse sheaves depends only upon the fundamental groups of the strata and the field $k$, and not on the perversity or on any information about how the strata are assembled to form the space. In fact, although we have formulated it in terms of perverse sheaves, it can be reformulated in the abstract setting of recollement of t--structures. 

\begin{proposition}
Suppose $\cat{D}_Z \stackrel{\imath_*}{\longrightarrow} \cat{D}_X \stackrel{\jmath^*}{\longrightarrow} \cat{D}_U$ is an exact triple of $k$-linear Hom-finite triangulated categories satisfying the axioms for recollement \cite[\S 1.4.3]{bbd}. Further suppose we have bounded t--structures on $\cat{D}_Z$ and $\cat{D}_U$ whose hearts are length categories with finitely many simple objects. Then the heart of the glued t--structure on $\cat{D}_X$ \cite[Thm 1.4.10]{bbd} has enough projectives if and only if the hearts of the t--structures on $\cat{D}_Z$ and $\cat{D}_U$ each have enough projectives.
\end{proposition}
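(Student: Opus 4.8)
The plan is to recognise the statement as the abstract shadow of Corollary \ref{necessary} (for the ``only if'' direction) and of Theorem \ref{sufficient} (for the ``if'' direction), and to check that every tool used in Section \ref{proj perv} is purely formal. Lemmas \ref{proj cover criteria}, \ref{proj extensions and restrictions}, \ref{proj restriction} and \ref{preserves proj covers}, together with the construction in the proof of Theorem \ref{sufficient}, never use anything about $X$ beyond the recollement formalism: the six functors with their adjunctions, the identities $\jmath^*\imath_*=0$, $\jmath^*\jmath_!=\id=\jmath^*\jmath_*$ and $\imath^*\imath_*=\id=\imath^!\imath_*$, the two gluing triangles, the $t$-exactness of $\imath_*$ and $\jmath^*$, and the support characterisation of the objects annihilated by $\jmath^*$. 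All of these are supplied by the axioms of recollement and the gluing of $t$-structures. So I would transcribe those arguments, writing $\cat{A}_X$, $\cat{A}_Z$, $\cat{A}_U$ for the three hearts and $\pfun{}{\imath^*}, \pfun{}{\imath^!}, \pfun{}{\jmath_!}, \pfun{}{\jmath_*}, \pfun{}{\jmath_{!*}}$ for the functors they induce between hearts.

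The first substantive step is a preliminary reduction: I would record that $\cat{A}_X$ is a Hom-finite, finite length, $k$-linear abelian category with finitely many simple objects, since this is exactly what makes it Krull--Remak--Schmidt and lets Lemma \ref{proj cover criteria} apply. Hom-finiteness is inherited from $\cat{D}_X$ because $\ext{n}{A}{B}=\mor{}{A}{B[n]}$ for objects of the heart. The simple objects are precisely the $\pfun{}{\jmath_{!*}}\sh{E}$ with $\sh{E}$ simple in $\cat{A}_U$ together with the $\imath_*\sh{F}$ with $\sh{F}$ simple in $\cat{A}_Z$ --- a formal consequence of recollement --- so there are finitely many of them. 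Finite length follows from the gluing of $t$-structures together with Hom-finiteness: applied to $M\in\cat{A}_X$, the two gluing triangles produce a canonical $Z$-supported subobject $\imath_*\pfun{}{\imath^!}M$ and a canonical $Z$-supported quotient $\imath_*\pfun{}{\imath^*}M$, each the image under $\imath_*$ of a finite-length object of $\cat{A}_Z$, while the exact functor $\jmath^*$ sends $M$ to the finite-length object $\jmath^*M\in\cat{A}_U$.

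With this in hand the two directions are formal transcriptions. For ``only if'' I mirror Corollary \ref{necessary}: if $\cat{A}_X$ has enough projectives, then a projective cover of $\pfun{}{\jmath_{!*}}\sh{E}$ restricts under $\jmath^*$ to a projective cover of the simple $\sh{E}\in\cat{A}_U$ by Lemma \ref{preserves proj covers}(2), and a projective cover of $\imath_*\sh{F}$ maps under $\pfun{}{\imath^*}$ to a projective cover of the simple $\sh{F}\in\cat{A}_Z$ by Lemma \ref{preserves proj covers}(3). For ``if'' I build a projective cover of each simple of $\cat{A}_X$. The simples $\pfun{}{\jmath_{!*}}\sh{E}$ are covered by $\pfun{}{\jmath_!}\sh{P}$, where $\sh{P}$ is a projective cover of $\sh{E}$ in $\cat{A}_U$ (which exists by hypothesis), via Lemma \ref{preserves proj covers}(1). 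For a simple $\imath_*\sh{F}$ I run the construction of Theorem \ref{sufficient} verbatim: take $\sh{B}_\sh{F}=\imath_*(\text{a projective cover of }\sh{F})$, assemble $\sh{P}=\bigoplus_{\sh{S}\in I}\proj{}{\sh{S}}\otimes\ext{1}{\sh{B}_\sh{F}}{\sh{S}}^\vee$ from the already-constructed covers $\proj{}{\sh{S}}$ of the finitely many simples $\sh{S}\in I$ supported on $U$, choose the maximal-length quotient $\sh{Q}$ satisfying the isomorphism (\ref{q surj}), and define the cover by the triangle (\ref{proj triangle}); the verification through Lemma \ref{proj cover criteria} --- injectivity of the connecting map, indecomposability, and the maximality argument --- is identical. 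No induction over strata is needed, since enough projectives on $\cat{A}_Z$ and $\cat{A}_U$ is now a hypothesis rather than something fed in by a filtration.

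The main obstacle is the preliminary finiteness step, not the transcription. The delicate point is establishing that $\cat{A}_X$ is a length category, because this is the one place where the construction of Theorem \ref{sufficient} uses finite length (to guarantee that the maximal-length quotient $\sh{Q}$ exists), and because the induced functors $\pfun{}{\imath^*},\pfun{}{\imath^!},\pfun{}{\jmath_!},\pfun{}{\jmath_*}$ are only half-exact, so that controlling composition length through them requires genuine care rather than a formal manipulation. I would either push the direct argument from the gluing triangles sketched above to a full bound on the length of $M$, or, exploiting that in the ``if'' direction each boundary heart is Hom-finite, of finite length, with finitely many simples \emph{and} enough projectives, observe that $\cat{A}_Z$ and $\cat{A}_U$ are then module categories of the finite-dimensional algebras $\End{\bigoplus_{\sh{S}}\proj{}{\sh{S}}}$ so that their projectives are finite length; this makes $\sh{P}$ finite length and is enough to run the maximality argument. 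Once the Hom-finiteness and finite length of $\cat{A}_X$ are secured, everything else is the formal transcription described above.
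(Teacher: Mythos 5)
Your proposal is correct and matches the paper's intent exactly: the paper offers no separate argument for this proposition, presenting it as the observation that Corollary \ref{necessary} and Theorem \ref{sufficient} (and the lemmas feeding into them) use only the formal recollement structure, which is precisely the transcription you carry out. Your extra attention to why the glued heart is Hom-finite and of finite length is a sensible addition rather than a departure, since that is the one hypothesis of the stratified setting that must be re-derived abstractly.
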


\section{Finite-dimensional algebras}
\label{fdas}

When there are enough projective perverse sheaves and finitely many simple ones perverse sheaves can be described as modules over a finite-dimensional algebra, and therefore also as representations of a quiver with relations. The direct sum of projective covers of the simple perverse sheaves is a projective generator of $\perv{p}{X}$ and tilting theory provides an equivalence between $\perv{p}{X}$ and finite-dimensional modules over its endomorphism ring. More precisely we apply the following result.
\begin{theorem}\cite[Chapter II, Exercise after Theorem 1.3]{MR0249491}
\label{proj gen thm}
Let $\cat{C}$ be a Hom-finite and length $k$-linear abelian category. Then $\cat{C}$ has a projective generator if and only if there is an exact equivalence $\cat{C} \simeq \mod{A}$ where $\mod{A}$ is the category of finite-dimensional (left) modules over a finite-dimensional $k$-algebra $A$.
\end{theorem}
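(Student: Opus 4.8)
The plan is to prove the two implications separately, the substance lying entirely in the forward direction, which reconstructs $\cat{C}$ from the endomorphism algebra of its projective generator. The reverse implication is routine: if $\cat{C} \simeq \mod{A}$ then the regular module $A$ is a projective generator of $\mod{A}$ (it is free, hence projective, and every finite-dimensional module is a quotient of some $A^n$), and both projectivity and the generator property are preserved under equivalence, so $\cat{C}$ inherits a projective generator.

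For the forward implication I would fix a projective generator $P$ of $\cat{C}$ and set $A = \End{P}^{\mathrm{op}}$, which is a finite-dimensional $k$-algebra by Hom-finiteness. Consider the functor $F = \Mor{\cat{C}}{P}{-} \colon \cat{C} \to \mod{A}$, where $\Mor{\cat{C}}{P}{X}$ carries its natural left $A$-module structure by precomposition and is finite-dimensional since $\cat{C}$ is Hom-finite. First I would observe that $F$ is exact: $\Mor{\cat{C}}{P}{-}$ is always left exact, and projectivity of $P$ supplies right exactness. Next I would check that $F$ is fully faithful on the full subcategory $\mathrm{add}(P)$ of finite direct sums of copies of $P$, since $F(P)\cong A$ as a left $A$-module, $F$ commutes with finite direct sums, and both $\Mor{\cat{C}}{P^m}{P^n}$ and $\Mor{A}{A^m}{A^n}$ are identified compatibly with $n\times m$ matrices over $\End{P}$.

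The core of the argument is to promote full faithfulness from $\mathrm{add}(P)$ to all of $\cat{C}$, and then to prove density. For full faithfulness I would use two-term projective presentations: since $P$ is a generator and every object has finite length, each $X\in\cat{C}$ admits a finite epimorphism $P^n \epic X$, and applying this again to the (finite-length) kernel gives a presentation $P^m \to P^n \to X \to 0$. Applying the left-exact contravariant functors $\Mor{\cat{C}}{-}{Y}$ and $\Mor{A}{F(-)}{FY}$ to this presentation, and using exactness of $F$ on the module side, produces two exact sequences joined by the natural comparison map, which is an isomorphism on the $\mathrm{add}(P)$-terms; the five lemma then yields $\Mor{\cat{C}}{X}{Y} \cong \Mor{A}{FX}{FY}$. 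For density, given a finite-dimensional (hence finitely presented, as $A$ is finite-dimensional) module $M$ with presentation $A^m \xrightarrow{g} A^n \to M \to 0$, full faithfulness on $\mathrm{add}(P)$ lifts $g$ to a unique $\tilde g \colon P^m \to P^n$, and exactness of $F$ gives $F(\coker \tilde g) \cong \coker(g) \cong M$. Thus $F$ is an exact equivalence onto $\mod{A}$.

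The main obstacle I anticipate is the third step, namely converting the abstract generator property into genuinely \emph{finite} projective presentations. The length and Hom-finiteness hypotheses are precisely what exclude infinite coproducts and guarantee both that finitely many maps $P\to X$ suffice to cover $X$ and that kernels of maps between finite powers of $P$ are again finitely covered by $P$; only then does the five-lemma comparison apply term by term. Once finite presentations are secured, the remainder is the standard Morita bookkeeping, and the identification of $A = \End{P}^{\mathrm{op}}$ as a finite-dimensional algebra is immediate from Hom-finiteness.
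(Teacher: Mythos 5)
Your proof is correct and is precisely the standard Morita-theoretic argument that the paper does not spell out, deferring instead to the cited exercise in Bass: the functor $\Mor{\cat{C}}{P}{-}$ with $A=\End{P}^{\mathrm{op}}$, exactness from projectivity, finite presentations $P^m\to P^n\to X\to 0$ secured by the length and Hom-finiteness hypotheses, the five-lemma comparison, and lifting of module presentations for density are exactly the expected route. One small elision worth repairing: the five-lemma step needs the comparison map $\Mor{\cat{C}}{P^n}{Y}\to\Mor{A}{FP^n}{FY}$ to be an isomorphism for \emph{arbitrary} $Y$, not merely $Y\in\mathrm{add}(P)$ as your second step verifies, but this follows at once from $F(P)\cong A$ together with the natural isomorphism $\Mor{A}{A}{M}\cong M$, under which the comparison map becomes the identity on $\Mor{\cat{C}}{P}{Y}$.
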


\begin{corollary}
\label{main result}
Let $X$ be a topologically stratified space and $p$ a perversity on $X$. There is an exact equivalence $\perv{}{X} \simeq \mod{A}$ where $A$ is a finite-dimensional $k$-algebra if and only if $X$ has finitely many strata and for each stratum $S$ there is a finite-dimensional $k$-algebra $A_S$ with an exact equivalence $\loc{S} \simeq \mod{A_S}$. 
\end{corollary}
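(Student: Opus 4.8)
The plan is to reduce both implications to Theorem~\ref{proj gen thm} by observing that an exact equivalence $\perv{}{X} \simeq \mod{A}$ with $A$ finite-dimensional is the same as $\perv{}{X}$ possessing a projective generator, and likewise $\loc{S} \simeq \mod{A_S}$ is the same as $\loc{S}$ possessing one. Each $\loc{S}$ is a Hom-finite $k$-linear length category (its objects are finite-dimensional, so Hom-spaces and lengths are finite), and $\perv{}{X}$ is one as soon as $X$ has finitely many strata; thus Theorem~\ref{proj gen thm} applies in both cases. In such a category the existence of a projective generator is in turn equivalent to having \emph{finitely many} isomorphism classes of simple objects together with \emph{enough projectives}: given these, Krull--Remak--Schmidt produces a projective cover for each simple and their finite direct sum is a projective generator; conversely a module category over a finite-dimensional algebra enjoys both properties. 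The one structural fact I would use throughout is the bijection recorded in \S\ref{background} between isomorphism classes of simple objects of $\perv{}{X}$ and pairs $(S,[\sh{L}])$, where $S$ is a stratum and $[\sh{L}]$ an isomorphism class of irreducible local system on $S$, sending $(S,[\sh{L}])$ to $\ic{p}{\sh{L}}$.

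For the `if' direction I would argue as follows. Suppose $X$ has finitely many strata and each $\loc{S} \simeq \mod{A_S}$. Then every $\loc{S}$ has finitely many simples and enough projectives, so Theorem~\ref{sufficient} guarantees that $\perv{}{X}$ has enough projectives. Via the bijection, finitely many strata each carrying only finitely many irreducible local systems produce only finitely many simple perverse sheaves. Since $X$ has finitely many strata, $\perv{}{X}$ is Hom-finite and length, so the direct sum of the projective covers of its finitely many simple objects is a projective generator, and Theorem~\ref{proj gen thm} delivers the equivalence $\perv{}{X} \simeq \mod{A}$.

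For the `only if' direction, suppose $\perv{}{X} \simeq \mod{A}$ with $A$ finite-dimensional, so that $\perv{}{X}$ has only finitely many isomorphism classes of simple objects. Here lies the one genuinely new point: every stratum $S$ is nonempty and supports the constant rank-one local system, contributing at least one pair $(S,[\sh{L}])$ to the bijection, so infinitely many strata would force infinitely many simple perverse sheaves. Hence $X$ has finitely many strata, and the bijection simultaneously shows that each $\loc{S}$ has only finitely many irreducible local systems (its simples inject into those of $\perv{}{X}$ under $\sh{L} \mapsto \ic{p}{\sh{L}}$). With finiteness of the strata in hand, $\perv{}{X}$ is Hom-finite and length and inherits enough projectives from $\mod{A}$, so Corollary~\ref{necessary} yields enough projectives in each $\loc{S}$. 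Each $\loc{S}$ is then a Hom-finite length category with finitely many simples and enough projectives, hence has a projective generator, and Theorem~\ref{proj gen thm} gives $\loc{S} \simeq \mod{A_S}$.

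The main obstacle is exactly the step in the forward direction that rules out infinitely many strata. The machinery of \S\ref{proj perv}---in particular Corollary~\ref{necessary}, which I need to pass projectivity down to the strata---is only available once $X$ is \emph{known} to have finitely many strata, so this finiteness cannot be assumed but must be extracted directly from the hypothesis $\perv{}{X} \simeq \mod{A}$. The counting argument through simple objects, relying on the fact that each stratum contributes its constant local system, does precisely this; after that point everything reduces to the already-established Theorems~\ref{sufficient} and~\ref{proj gen thm} and Corollary~\ref{necessary}. The remaining bookkeeping---that $\loc{S}$ and $\perv{}{X}$ are Hom-finite length, and that a projective generator is the same data as enough projectives together with finitely many simples---is routine.
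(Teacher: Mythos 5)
Your proposal is correct and follows essentially the same route as the paper: both directions reduce to Theorem~\ref{proj gen thm} via the identification of simple perverse sheaves with pairs (stratum, irreducible local system), with Theorem~\ref{sufficient} supplying enough projectives for the `if' direction and Corollary~\ref{necessary} passing projectivity down to the strata for the `only if' direction. The counting argument you highlight (each stratum contributes at least the constant rank-one local system, so finitely many simples forces finitely many strata) is exactly the content of the paper's remark that $\perv{p}{X}$ has finitely many simple objects if and only if $X$ has finitely many strata each with finitely many irreducible local systems.
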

\begin{proof}
Recall that there are finitely many simple modules over any finite-dimensional algebra, and note that $\perv{p}{X}$ has finitely many simple objects if and only if $X$ has finitely many strata each with only finitely many irreducible local systems. The result follows by combining Theorems \ref{sufficient} and \ref{proj gen thm} with Corollary \ref{necessary}.
\end{proof}

We emphasise that the perversity $p$ and the links of the stratification of $X$ play no role. Of course, these do enter into the determination of an algebra $A$ whose module category is the perverse sheaves. An immediate consequence is that $\perv{p}{X}$ has a projective generator  if and only if $\perv{p^*}{X}$, where $p^*$ is the dual perversity, has one. Hence, by duality, $\perv{p}{X}$ has  an injective cogenerator if and only if it has a projective generator.

The immediate corollary identifies a large class of examples.
\begin{corollary}
Suppose $X$ is a topologically stratified space with finitely many strata, each with finite fundamental group, and $p$ any perversity. Then the category $\perv{p}{X}$ of perverse sheaves with coefficients in a field $k$ is equivalent to the category of finite-dimensional (left) modules over a finite-dimensional $k$-algebra.
\end{corollary}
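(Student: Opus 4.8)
The plan is to verify the two hypotheses of Corollary \ref{main result}. That $X$ has finitely many strata is given outright. For the remaining condition I must exhibit, for each stratum $S$, a finite-dimensional $k$-algebra $A_S$ together with an exact equivalence $\loc{S} \simeq \mod{A_S}$.

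To supply these algebras I would appeal to the monodromy equivalence recalled in \S\ref{local systems}. Each stratum $S$ is connected by definition, so taking monodromy furnishes an exact equivalence $\loc{S} \simeq \mod{k[\pi_1 S]}$ between finite-dimensional local systems on $S$ and finite-dimensional left modules over the group algebra of $\pi_1 S$. The natural candidate is therefore $A_S = k[\pi_1 S]$.

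It then remains only to check that this candidate is genuinely finite-dimensional, and this is exactly where the hypothesis on fundamental groups enters: since $\pi_1 S$ is finite, the group algebra $k[\pi_1 S]$ has $k$-dimension $|\pi_1 S| < \infty$. Taking $A_S = k[\pi_1 S]$ thus satisfies both hypotheses of Corollary \ref{main result}, and the desired equivalence $\perv{p}{X} \simeq \mod{A}$ for some finite-dimensional $k$-algebra $A$ follows at once. I expect no serious obstacle here, since all the real work has already been absorbed into Corollary \ref{main result} and the monodromy description of local systems; the only point needing the slightest care is the standard observation that finiteness of $\pi_1 S$ is precisely what makes the finite-dimensional local systems on $S$ coincide with the full module category of the finite-dimensional algebra $k[\pi_1 S]$.
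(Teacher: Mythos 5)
Your argument is correct and is exactly the route the paper intends: the corollary is deduced from Corollary \ref{main result} by observing that each (connected) stratum $S$ has $\loc{S} \simeq \mod{k[\pi_1 S]}$ via monodromy, with $k[\pi_1 S]$ finite-dimensional because $\pi_1 S$ is finite. Nothing is missing.
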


\subsection{Remarks on computations}

How can one find, when it exists, a finite-dimensional $k$-algebra $A$ such that $\perv{p}{X} \simeq \mod{A}$? As mentioned in the introduction there are several known approaches, but these  involve extra geometric assumptions on $X$ and restrictions on the perversity $p$. The constructions and results above open the possibility of more algebraic approaches. We outline three of these. A second paper will give detailed examples.

The first approach is the most direct. The proof of Theorem \ref{sufficient} explains how to inductively construct  a projective cover $\sh{P}_\sh{L}$ of a simple perverse sheaf $\ic{p}{\sh{L}}$. If one can do this then the sum $\bigoplus_\sh{L} \sh{P}_\sh{L}$ is a projective generator, and the algebra we seek is its endomorphism ring. Unfortunately, it is not easy to implement this construction of $\sh{P}_\sh{L}$ as an effective algorithm. The principal obstruction is that one has to find a maximal length quotient $\sh{Q}$ of
\[
\sh{P}=\bigoplus_{\sh{M} \in I} \proj{}{\sh{M}} \otimes \ext{1}{\sh{B}_\sh{L}}{\ic{p}{\sh{M}}}^\vee
\]
satisfying the property (\ref{q surj}). This quotient exists, and is unique by Remark \ref{identifying Q}, but we do not have a better construction than searching through all the quotients. For the top `classical' perversity $p(S)=-\dim_\R(S)$ the simple perverse sheaves have the form ${\jmath_S}_*\sh{L}[-\dim_\R(S)]$, where $\jmath_S \colon S \hookrightarrow \overline{S}$. This implies that $\sh{P}=0$ whence also $\sh{Q}=0$. So in this case the construction degenerates and the projective cover $\sh{P}_\sh{L} = \pfun{p}{{\jmath_S}_!}\sh{L}[-\dim_\R(S)]$. However, in general there can be multiple quotients satisfying  (\ref{q surj}). For example, let $X=\C\PP^1$ stratified by $\C$ and a point $\infty$, let  $p(S) = -\dim_\C(S)$ be the middle perversity, and $\sh{L} = k_\infty$ be the skyscraper on the point stratum. Then $\sh{P}=\jmath_!k_\C[1]$ and both itself and its quotient $\pfun{p}{\jmath_{!*}}k_\C[1]$ satisfy (\ref{q surj}). Choosing the maximal length quotient, as the construction dictates, gives $\sh{Q}=\sh{P}=\jmath_!k_\C[1]$ and verifies (as is well-known) that Be\u{\i}linson's maximal extension is the projective cover of $\imath_*k_\infty$.

The second approach is to try to obtain a quiver description. When $k$ is algebraically closed, the category of finite-dimensional modules over a finite-dimensional algebra is equivalent to the category of finite-dimensional representations of a finite quiver with admissible relations \cite[Chapter II, Theorem 3.7]{assem_skowronski_simson_2006}. Therefore, when $k$ is algebraically closed and $\perv{p}{X}$ has a projective generator, perverse sheaves have a quiver description. The quiver is the Ext-quiver --- it has one vertex for each isomorphism class of simple objects (of which there are finitely many, labelled by irreducible local systems on the strata) and $\dim_k \ext{1}{\ic{p}{\sh{L}}}{\ic{p}{\sh{M}}}$ arrows from the vertex labelled by $\sh{L}$ to that labelled by $\sh{M}$. These groups can be computed inductively in terms of intersection cohomology groups of links, or by using a spectral sequence \cite[\S 3.4]{BGS}. The relations are determined by the canonical $A_\infty$-structure on the algebra $\Ext{\perv{p}{X}}{*}{\sh{S}}{\sh{S}}$ where $\sh{S}$ is the direct sum of the simple objects \cite[\S2.8.4]{MR2264803}. This raises two difficulties. Firstly if the perverse heart is not faithful then the underlying algebra is not the same as $\Ext{\constr{c}{X}}{*}{\sh{S}}{\sh{S}}$ in higher degrees. Whilst the latter can be computed within $\constr{c}{X}$, and thereby directly related to the topology of $X$, the former is much less accessible. Secondly, the $A_\infty$-structure is itself hard to construct. One can obtain the quadratic part of the relations from (the dual of) the composition 
\[
\ext{1}{\sh{S}}{\sh{S}} \otimes \ext{1}{\sh{S}}{\sh{S}} \to \Ext{\perv{p}{X}}{2}{\sh{S}}{\sh{S}} \hookrightarrow \Ext{\constr{c}{X}}{2}{\sh{S}}{\sh{S}}
\]
using the fact that the second map is injective \cite[Lemma 2.3]{BGS}, \ie from composition of morphisms in $\constr{c}{X}$.
 In several very interesting examples the $A_\infty$-structure is formal and all relations are quadratic so this suffices --- see \eg  \cite{BGS} and \cite{MR2264803}. However in general  the $A_\infty$-structure is non-formal and there are also higher relations which are difficult  to compute.

The third approach is via silting theory. Assume that $\perv{f}{X}$ is a faithful heart for some perversity $f$. This is the case for instance if $X$ is
\begin{enumerate}
\item  a complex projective variety stratified by affine subvareities $S$ with $H^{>0}(S;k)=0$ and $f(S) = -\dim_\C(S)$ is the middle perversity \cite[\S1.5]{MR2119139}; or
\item  a compact space stratified by a simplicial triangulation and $f$ is a `classical' perversity, \ie $f(S)=f\left(\dim_\R(S)\right)$ satisfies $f(0)=0$ and $m-n \leq f(n)-f(m) \leq 0$ \cite[Thm 4.2]{MR1453053}.
\end{enumerate}
Length hearts in $\constr{c}{X} \simeq \cat{D}^b\!\left( \perv{f}{X} \right)$ correspond to silting objects in the bounded homotopy category $\cat{K}^b\!\left(\projperv{f}{X}\right)$ of projective perverse sheaves \cite{MR3178243}. In our setting, a faithful heart has global dimension bounded by $\dim_\R(X)$, in particular it is finite, so that the canonical functor
\[
\cat{K}^b\!\left(\projperv{f}{X}\right) \to \cat{D}^b\!\left( \perv{f}{X} \right)
\]
is an equivalence. Thus there is a correspondence between length hearts and silting objects in $\constr{c}{X}$. Moreover, this correspondence is compatible with silting mutation and simple Happel--Reiten--Smal\o\ tilting. 

Since each perverse heart $\perv{p}{X}$ is length, each corresponds to a silting object  $\sh{S}_p$. The latter can be obtained by  starting with  a basic projective generator of $\perv{f}{X}$ and performing a sequence of silting mutations corresponding to a sequence of simple tilts leading from $\perv{f}{X}$ to $\perv{p}{X}$. Such a sequence always exists --- if perversities $p$ and $q$ differ by $1$ on a single stratum then the corresponding hearts are related by a Happel--Reiten-Smal\o\ tilt, which can be decomposed into a finite sequence of simple tilts. The perverse cohomology $\pfun{p}{H}^0(\sh{S}_p)$ is a projective generator of $\perv{p}{X}$, and $\perv{p}{X}$ is faithful precisely when $\sh{S}_p$ is tilting, equivalently when $\pfun{p}{H}^0(\sh{S}_p)  \cong \sh{S}_p$. Even if $\sh{S}_p$ is not tilting, there is an algebra isomorphism $\End{\sh{S}_p}\cong \End{\pfun{p}{H}^0(\sh{S}_p)}$ so that  $\perv{p}{X} \simeq \mod{\End{\sh{S}_p}}$. In summary, this approach is productive if there is a faithful heart $\perv{f}{X}$ for which we can compute a basic projective generator.

%
%

\end{document}